\newtheorem{theorem}{Theorem}
\newtheorem{proposition}{Proposition}
\newtheorem*{thmA}{Theorem}
\def\beq{\begin{equation}}
\def\eeq{\end{equation}}
\numberwithin{equation}{section}
\newcommand{\kap}{\mathrm{cap}}
\newcommand{\R}{{\mathbb R}}
\newcommand{\C}{{\mathbb C}}
\newcommand{\D}{{\mathbb D}}
\newcommand{\U}{{\mathcal U}}
\title[The univalent Bloch-Landau constant]{The univalent Bloch-Landau constant, harmonic symmetry and conformal glueing}
\author{Tom Carroll}
\address{Department of Mathematics, National University of Ireland, Cork, Ireland.} 
\email{t.carroll@ucc.ie}
\author{Joaquim Ortega-Cerd\`a}
\address{Departament de Matem\`atica Aplicada i An\`alisi, Universitat de Bar\-ce\-lo\-na, Gran Via 585, 08007 Barcelona, Spain.}
\email{jortega@ub.edu}
\keywords{Univalent Bloch-Landau constant; Conformal welding; Harmonic symmetry}
\begin{document}
\maketitle
\begin{abstract}
By modifying a domain first suggested by Ruth Goodman in 1935 and by exploiting
the explicit solution by Fedorov of the Poly\'a-Chebotarev problem in the case
of four symmetrically placed points, an improved upper bound for the univalent
Bloch-Landau constant is obtained.  The domain that leads to this improved bound
takes the form of a disk from which some arcs are removed in such a way that the
resulting simply connected domain is harmonically symmetric in each arc with
respect to the origin. The existence of domains of this type is
established, using techniques from conformal welding, and some general
properties of harmonically symmetric arcs in this setting are established.
\end{abstract}

\section{The univalent Bloch-Landau constant and harmonic symmetry.}
 We write $R_D$ for the supremum radius of all disks contained in a planar
domain $D$, this geometric quantity is called the {\sl inradius\/} of the
domain. We write $\D$ for the disk with centre zero and radius one in the complex
plane. 
 
Let us suppose that $f$ is a univalent map of the unit disk
$\D$. There is a number $\U$, independent of $f$, such that
\beq\label{3}
R_{f(\D)} \geq \U \vert f'(0)\vert.
\eeq
Thus the image of the unit disk under any univalent map $f$ of $\D$ contains
some disk of every radius less than $\U\vert f'(0)\vert$. The number $\U$, known
as the univalent or schlicht Bloch-Landau constant, is the largest number for
which \eqref{3} holds, in that if $U>\U$ then there is a conformal mapping $f$
of the unit disk for which $f(\D)$ contains no disk of radius $U\vert
f'(0)\vert$.  This constant was introduced in 1929 by Landau \cite{Landau},
following on from Bloch's famous paper \cite{Bloch} of a few years earlier. It
is a consequence of the Koebe one-quarter theorem that $\U \geq 1/4$. Landau himself
proved $\U > 0.566$ in \cite{Landau}. Over time, Laudau's estimate for $\U$ was
improved by Reich \cite{Reich} ($\U > 0.569$), Jenkins \cite{Jenkins1}
($\U>0.5705$),  Toppila \cite{Top} ($\U> 0.5708$), Zhang \cite{Zhang} and Jenkins
\cite{Jenkins2} ($\U>0.57088$). Most recently, Xiong \cite{Xiong} has proved that
$\U > 0.570884$. Over the years, several domains have been put forward that provide
upper bounds for $\U$, among them those of Robinson \cite{ROB1} ($\U < 0.658$)
in 1935, Goodman \cite{Good} ($\U < 0.65647$) in 1945 and, most recently, Beller
and Hummel \cite{BH} ($\U < 0.6564155$) in 1985. Our first  result is an improved upper bound for $\U$.  
\begin{theorem}\label{t1}There is a simply connected domain $D_0$ that has inradius 1, and a conformal map $f$ of the unit disk $\mathbb{D}$ onto $D_0$ for which
$$
\U \leq \frac{1}{\vert f^{'}(0)\vert} \leq 0.6563937.
$$
\end{theorem}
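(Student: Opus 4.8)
The plan is to exhibit a single admissible domain $D_0$ together with a lower bound for $|f'(0)|$: since Theorem~\ref{t1} asserts only an \emph{upper} bound for $\U$, no extremality or uniqueness of $D_0$ has to be proved, and, choosing $f$ so that $f(0)=0$, the quantity $|f'(0)|$ is just the conformal radius of $D_0$ at the origin, so from the definition of $\U$ one automatically gets $\U\le 1/|f'(0)|$. Concretely, I would look for $D_0$ of the form $\{|z|<R\}$ with four analytic arcs removed, the radius $R$ and the arcs chosen so that the whole configuration is invariant under the dihedral group generated by $z\mapsto iz$ and $z\mapsto\bar z$, the largest disk contained in $D_0$ is the unit disk centred at the origin, and $D_0$ is harmonically symmetric in each deleted arc with respect to the origin. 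Harmonic symmetry is the natural side condition to impose here: a first--variation computation shows that if, for some deleted arc $\gamma$, the harmonic measure at the origin were not carried to itself by the reflection interchanging the two edges of $\gamma$, then a small normal perturbation of $\gamma$ would strictly increase $|f'(0)|$ while leaving the inradius at most $1$; so a near--extremal domain for this problem must have this shape.

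The construction of such a $D_0$ I would carry out by making it explicit. Harmonic symmetry with respect to the origin says precisely that, under a Riemann map $f\colon\D\to D_0$ with $f(0)=0$, the welding homeomorphism that identifies the two preimage arcs of each deleted arc on the unit circle preserves harmonic measure at the origin --- equivalently, arc length on the circle --- and, using the symmetry, this reduces to a single conformal glueing problem on a fundamental sector. Rather than rely only on an abstract existence theorem for this welding, I would identify the auxiliary extremal configuration that governs the shape of the arcs with the continuum of minimal logarithmic capacity joining four symmetrically placed points. That is exactly the instance of the Poly\'a--Chebotarev problem that Fedorov solved in closed form, and his explicit parametrisation then expresses $f$, the shape and endpoints of the arcs, the inradius of $D_0$, and $|f'(0)|$ as explicit functions --- through complete elliptic integrals of a real modulus --- of a single real parameter $t$.

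The proof then reduces to a one--parameter computation: fix $t$ (equivalently, rescale) so that the inscribed disk has radius exactly $1$, evaluate the resulting value of $|f'(0)|$, and check that $1/|f'(0)|\le 0.6563937$. Because everything is written in terms of elliptic integrals of a real modulus, this last step is finite and can be made rigorous: interval--arithmetic enclosures of the integrals give a certified lower bound for $|f'(0)|$ and hence the stated certified upper bound for $\U$, improving on the earlier bounds of Robinson, of Goodman, and of Beller and Hummel.

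I expect the real obstacle to be the inradius claim --- that the domain produced by Fedorov's solution genuinely has inradius $1$, i.e.\ that no disk of radius slightly larger than $1$ passes through an angular gap between two of the arcs or slips around an endpoint of one of them. By the dihedral symmetry the centre of a largest inscribed disk must lie on a symmetry axis, which turns this into the one--variable assertion that the distance from $\partial D_0$ decreases monotonically as one moves along each such axis away from the origin; but that estimate is delicate precisely because $D_0$ is pinched to inradius $1$ near four points, so the relevant distance function is nearly stationary there and one must exploit the explicit geometry carefully. A subsidiary point, to be settled before the one--parameter optimisation is legitimate, is that the harmonic--symmetry glueing problem is actually solvable and produces arcs consistent with the prescribed symmetry --- and it is here that having Fedorov's explicit solution, rather than a bare welding theorem, does the essential work.
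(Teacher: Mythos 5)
Your overall strategy---exhibit one slit-disk domain with harmonically symmetric arcs, use Fedorov's explicit solution of the P\'olya--Chebotarev problem to evaluate the conformal radius at $0$, normalise the inradius to $1$, and optimise over one remaining parameter---is indeed the paper's strategy. But the domain you propose is not the one that yields the stated bound, and this is a genuine gap rather than a cosmetic difference. You take $D_0$ to be a disk minus \emph{four} arcs with the dihedral symmetry generated by $z\mapsto iz$ and $z\mapsto\bar z$. The actual domain is a truncation of Goodman's configuration: the disk $\{|z|<R\}$ (with $R\approx 4.05$) minus the six radial slits of the domain $G_2$ (three issuing from the cube roots of unity, three from twice the cube roots of $-1$), with \emph{six} further harmonically symmetric arcs inserted; its symmetry group is generated by $z\mapsto e^{2\pi i/3}z$ and conjugation. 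The ``four symmetrically placed points'' in Fedorov's theorem are not four arcs of $D_0$: they are the points $0$, $c$, $e^{\pm i\alpha}$ in an auxiliary plane, obtained after passing to the threefold-symmetrised domain $\Omega_{z_0,R}$ and applying a Koebe-type map $\psi$; minimising the capacity of the continuum through those four points is what encodes the harmonic symmetry of the two new arcs of $\Omega_{z_0,R}$ and maximises $|g'(0)|$. A four-slit, fourfold-symmetric domain has no reason to reach $1/|f'(0)|\le 0.6563937$; every domain known to beat $0.66$ starts from the six-slit configuration $G_2$, and without identifying the correct domain your final numerical verification has nothing to verify.

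The second gap is in your treatment of the inradius. Your reduction rests on the claim that, by symmetry, a largest inscribed disk must be centred on a symmetry axis; this is false here. In the actual configuration the critical inscribed disks are the unit circles $C_1$ (centred at $(1+\sqrt{2\sqrt3-3},\,1)$, tangent to the slit $[1,R]$ and passing through the tip $2e^{i\pi/3}$), $C_2$ (tangent to $\{|z|=R\}$ and to the halfline of argument $\pi/3$) and $C_3$ (tangent to $\{|z|=R\}$ and to $[1,R]$), none of which is centred on a reflection axis of the domain. The paper's route is the reverse of yours: rather than verifying the inradius of a domain produced by the welding, it \emph{defines} the endpoint $w$ of the new arc as the intersection point of $C_1$ and $C_2$ and then checks the elementary inequality ($\arg w<\pi/6$ for $R\le 4.5$) guaranteeing $w$ lies inside $C_3$, so that inradius $1$ holds by construction and only the single parameter $R$ remains to be optimised. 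Your instinct that the inradius is the delicate point is right, but the monotonicity-along-axes argument you propose would not close it.
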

The significance of this result is not so much the numerical improvement in the
upper bound for $\U$, but rather the shape of the domain that produced it, which is shown in Figure~\ref{real}. 
\begin{figure}[ht]
\begin{center}
\includegraphics{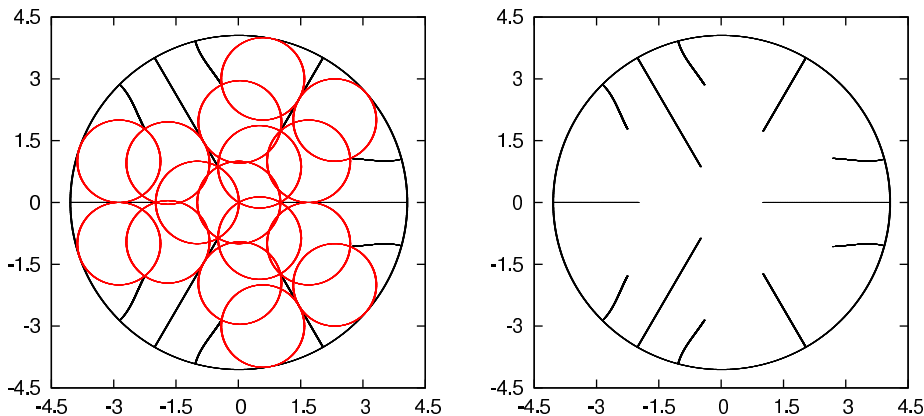}
\caption{The domain $D_0$ and the extremal disks}\label{real}
\end{center}
\end{figure}

We may write 
\beq\label{4}
\U = \inf\left\{ \frac{R_{f(\D)}}{\vert f^{'}(0)\vert} \colon f \mbox{ is
univalent in }\D\mbox{ and } f(0) = 0\right\}.
\eeq
This infimum is attained. If $f$ is univalent in $\D$ with $f(0)=0$, $f^{'}(0)=1$, and if
$R_{f(\D)} = \U$, then $f$ is a {\sl Bloch function of the third kind\/} and the
domain $D = f(\D)$ is said to be an {\sl extremal domain\/} for the inequality \eqref{3}.  A proof that extremal domains exist was first written down explicitly by Robinson \cite{ROB2}. Jenkins \cite{Jenkins2} has proved  that an extremal domain must contain an extremal disk, that is, a disk of
radius $\U$.  In \cite{Jenkins3}, Jenkins described a condition that any extremal domain for the univalent Bloch-Landau inequality \eqref{3} must satisfy. This condition was extended by the first author in \cite{C}. In order to describe this more general condition, we need the notion of harmonic symmetry. 

A simple $\mathcal{C}^1$ arc $\gamma$ is said to be an {\sl internal boundary arc} for a domain $D$ if $\gamma$ is part of the boundary of $D$ and if, to each non-endpoint $\zeta$ of $\gamma$, there corresponds a positive $\epsilon$ such that the disk with centre
$\zeta$ and radius $\epsilon$ is part of $D\cup \gamma$. In the case that $D$ is
simply connected, each non-endpoint $\zeta$ of the arc $\gamma$ corresponds to
two prime ends of $D$ and each has a Poisson kernel associated with it, which we
denote by $P_1(\zeta, z)$ and $P_2(\zeta, z)$.  We say that $D$ is
\emph{harmonically symmetric in $\gamma$ with respect to $z_0$\/} if 
\[
P_1(\zeta, z_0) = P_2(\zeta, z_0) \mbox{ whenever } \zeta \mbox{ is a
non-endpoint of }\gamma.
\]
For example, if $D = \D\setminus\gamma$ then $D$ is harmonically
symmetric in the arc $\gamma$ with respect to $0$ if and only if $\gamma = [r,1]$ for some $r$ in $(0,1)$, up to rotation. More generally, if $\gamma$ is
an internal boundary arc for $D$ and if also $D\cup\gamma$ is simply connected then
$D$ is harmonically symmetric in $\gamma$ with respect to $z_0$ if and only if
$\gamma$ is in a geodesic arc through $z_0$ in the hyperbolic metric for
$D\cup\gamma$.  This can be seen by conformally mapping  $D\cup \gamma$ onto
$\D$ so that $z_0$ corresponds to $0$ and $\gamma$ corresponds to an arc $\tilde\gamma$, and then using the conformal invariance of the Possion kernel to conclude that $\D\setminus\tilde\gamma$ is harmonically symmetric in $\tilde\gamma$ with respect to $0$.

The extension of Jenkins' condition in \cite{C} shows that there is a close relationship between the univalent Bloch-Landau constant and harmonic symmetry. 
\begin{thmA}
Suppose that $D$ is an extremal domain  for the univalent Bloch-Landau constant.
Suppose that $\gamma$ is an internal boundary arc for $D$, no point of which
lies on the boundary of an extremal disk. Then $D$ is harmonically symmetric in
$\gamma$ with respect to $0$.
\end{thmA}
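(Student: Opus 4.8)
\section*{Proof proposal}

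The plan is to argue by contradiction, by means of a Hadamard-type boundary variation of $D$ that transfers a thin sliver of area across $\gamma$ from one side to the other. Normalise the conformal map $f$ of $\D$ onto $D$ so that $f(0)=0$ and $f'(0)=1$; then $R_D=\U$ since $D$ is extremal. Suppose, for a contradiction, that $D$ is \emph{not} harmonically symmetric in $\gamma$ with respect to $0$. Then there is a non-endpoint $\zeta_0$ of $\gamma$ at which the Poisson kernels of the two prime ends of $D$ lying over $\zeta_0$ differ, say $P_1(\zeta_0,0)>P_2(\zeta_0,0)$. Since $\gamma$ is $\mathcal{C}^1$, the kernels $P_1(\cdot,0)$ and $P_2(\cdot,0)$ are continuous along $\gamma$, so $P_1(\zeta,0)>P_2(\zeta,0)$ throughout a subarc $\gamma_0$ of $\gamma$ containing $\zeta_0$; choose $\epsilon>0$ so small that $\overline{B(\zeta_0,\epsilon)}\subset D\cup\gamma$, that $0\notin\overline{B(\zeta_0,\epsilon)}$, and that $\gamma\cap\overline{B(\zeta_0,\epsilon)}\subset\gamma_0$.

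The first step is to freeze the inradius. I claim that the hypothesis --- that no point of $\gamma$ lies on the boundary of an extremal disk --- forces a gap: there are $\rho<\U$ and a neighbourhood $N$ of $\gamma$ such that every disk contained in $D$ that meets $N$ has radius at most $\rho$. Indeed, were this false, one could find disks $B(a_n,r_n)\subset D$ with $r_n\to\U$ and $\operatorname{dist}(B(a_n,r_n),\gamma)\to 0$; as $\gamma$ is compact, $(a_n)$ is bounded, and a subsequential limit gives a disk $B(a,\U)\subset D$ --- an extremal disk --- with a point of $\gamma$ on its boundary, contrary to hypothesis. Granted the gap, any modification of $D$ supported in $B(\zeta_0,\epsilon)$, with $\epsilon$ small and the size of the modification small compared with $\epsilon$, leaves the inradius equal to $\U$: the extremal disks of $D$ lie outside $N$ and survive; a disk in the modified domain disjoint from $B(\zeta_0,\epsilon)$ already lay in $D$; and shrinking a disk in the modified domain that meets $B(\zeta_0,\epsilon)$ by the (small) size of the modification produces a disk lying in $D$ and meeting $N$, hence of radius at most $\rho<\U$.

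Now the variation. Inside $B(\zeta_0,\epsilon)$ the arc $\gamma$ is a $\mathcal{C}^1$ crosscut of the ball, splitting it into two regions, each contained in $D$ and abutting $\gamma$ along one of its two sides. For small $t>0$ let $\gamma_t$ be the $\mathcal{C}^1$ arc agreeing with $\gamma$ outside $B(\zeta_0,\epsilon)$ and obtained, inside the ball, by displacing $\gamma$ a distance $t\,\varphi(\zeta)$ towards side~$2$, where $\varphi\ge 0$ is a fixed bump supported in $\gamma_0\cap B(\zeta_0,\epsilon)$, and let $D_t$ be the corresponding simply connected domain, for which $\gamma_t$ is an internal boundary arc and $0\in D_t$. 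Then $D_t$ differs from $D$ only in the thin crescent between $\gamma$ and $\gamma_t$, which has been detached from side~$2$ and attached to side~$1$. Let $f_t\colon\D\to D_t$ be conformal with $f_t(0)=0$; Hadamard's variational formula applied to the Robin constant $\log|f_t'(0)|$ of $D_t$ at the origin, together with the boundary identity $|\partial_n g_D(\zeta,0)|=2\pi P_i(\zeta,0)$ on side~$i$ of $\gamma$, yields
\[
\left.\frac{d}{dt}\log|f_t'(0)|\,\right|_{t=0}=2\pi\int_{\gamma_0}\left(P_1(\zeta,0)^2-P_2(\zeta,0)^2\right)\varphi(\zeta)\,|d\zeta|,
\]
because side~$1$ advances into the crescent while side~$2$ retreats, and $|\nabla g_D|$ is the larger on side~$1$. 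By the choice of $\gamma_0$ the right-hand side is strictly positive, so $|f_t'(0)|>1$ for small $t>0$ while $R_{D_t}=\U$; then $R_{D_t}/|f_t'(0)|<\U$, contradicting \eqref{4}. (If instead $P_2(\zeta_0,0)>P_1(\zeta_0,0)$, displace $\gamma$ towards side~$1$.) Hence $D$ must be harmonically symmetric in $\gamma$ with respect to $0$.

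The book-keeping --- that $D_t$ is again simply connected with $\gamma_t$ an internal boundary arc, and that $f_t'(0)\to f'(0)$ as $t\to 0$ --- should be routine. The step I expect to be the real obstacle is making the variational identity above rigorous under the limited regularity at hand: justifying the Hadamard formula for the Robin constant near a boundary arc that is only $\mathcal{C}^1$, establishing the boundary behaviour and continuity of the prime-end Poisson kernels $P_i(\cdot,0)$, and controlling the remainder so that the sign of the derivative is genuinely governed by $\int_{\gamma_0}(P_1^2-P_2^2)\varphi$. Closely related is the need for uniformity in the inradius step: the gap $\rho<\U$ and the neighbourhood $N$ must be fixed before $\epsilon$ and before $t$, and this is precisely where the hypothesis that $\gamma$ avoids the boundaries of all extremal disks is used.
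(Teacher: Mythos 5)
First, a point of order: the paper does not prove Theorem A at all --- it is quoted from the first author's earlier article \cite{C}, so there is no in-paper proof to measure yours against. Judged on its own terms, your strategy (freeze the inradius using the hypothesis on extremal disks, then run a boundary variation of the slit and contradict \eqref{4}) is the natural one and much of it is genuinely right. In particular, the inradius step is complete and correctly locates where the hypothesis is used: your compactness argument producing a limiting disk of radius $\U$ in $D$ whose boundary meets $\gamma$ is sound, and the resulting gap $\rho<\U$ does let you perturb inside $B(\zeta_0,\epsilon)$ without raising the inradius. The first-order expression $2\pi\int(P_1^2-P_2^2)\varphi$ is also the correct stationarity condition --- harmonic symmetry is exactly its vanishing for every bump $\varphi$.

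The genuine gap is the one you name and then set aside, and it is not merely technical book-keeping. For an internal boundary arc that is only $\mathcal{C}^1$, the prime-end Poisson kernels $P_i(\cdot,0)$ exist almost everywhere but need not be continuous along $\gamma$; so you cannot in general pass from $P_1(\zeta_0,0)>P_2(\zeta_0,0)$ at a single point to a strict inequality on a subarc, and the $o(t)$ remainder in Hadamard's formula is not controlled by the pointwise kernels without something like $\mathcal{C}^{1,\alpha}$ regularity of $\gamma$, which is not among the hypotheses. Since the whole contradiction rests on the sign of $\tfrac{d}{dt}\log|f_t'(0)|$ at $t=0$, the argument as written does not close. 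This is presumably why the paper, at the end of Section~1, emphasizes Lavrentiev's integrated formulation of harmonic symmetry --- the two sides of each \emph{subarc} of $\gamma$ have equal harmonic measure at $0$, equivalently preimages of equal length on the unit circle --- and notes that this is the formulation adopted in \cite{C}. Recasting your negation as ``some subarc of $\gamma$ has two sides of unequal harmonic measure at $0$'' and expressing the first-order gain of the crescent variation through those harmonic measures (or through the conformal preimages of the two sides), rather than through pointwise values of $P_1$ and $P_2$, is what is needed to make the contradiction rigorous at the stated level of regularity.
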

The domains that were constructed in \cite{ROB1, Good, BH} in order to obtain upper
bounds for $\U$ are essentially disks with radial slits removed. The above extremality  condition
suggests how the domains in \cite{ROB1, Good, BH} might be modified so as to make them closer to being extremal, and in turn leads to Theorem~\ref{t1}. 

Harmonic symmetry arises in connection with problems other than the
determination of extremal domains for the univalent Bloch-Landau inequality. It
previously appeared in the work of Betsakos \cite[Proposition~2.1]{Bet} in
relation to another extremal problem, that of maximizing $\vert f'(0)\vert$ over
the family of all conformal maps $f$ of the unit disk, with $f(0)=0$, onto
simply connected subdomains of the unit disk whose complement must contain some
specified points. But the idea goes back much further than this, to Lavrentiev
\cite{Lav} and Gr\"otzsch \cite{Grotzsch}, in the context of the
P\'olya-Chebotarev problem \cite{Polya} that consists in determining the continuum
that has minimal capacity and that contains a given finite set of points in $\C$.
In Lavrentiev's formulation of harmonic symmetry, the preimage of each subarc of an internal boundary arc of the simply connected domain $D$, under a conformal map of the disk onto $D$ under which $0$
corresponds to $z_0$ in $D$, will comprise of two arcs of equal length on the
unit circle. Thus the two \lq sides\rq\ of each subarc of the internal boundary
arc have the same harmonic measure at $z_0$. This is also the formulation
adopted in \cite{C}. 

In our second main result we study domains formed when the disk is
slit along simple arcs in such a way that the resulting domain is simply
connected and is harmonically symmetric in each arc with respect to $0$. We show that the harmonic measure of each arc may be specified, together with the harmonic measure between  the endpoints of the arcs on the unit circle.  To be precise, we consider families $\Gamma$ consisting of a finite number of simple arcs that do not intersect, do not pass through the origin, and lie inside the unit disk $\D$ except for one endpoint of each arc that lies instead on the unit circle.  For the purposes of this paper, we call such a family of arcs \lq admissible\rq. We associate with $\Gamma$ the domain $D(\Gamma)$ that is the complement of the traces of the arcs in the family, so that $D(\Gamma)$ is a simply connected domain containing 0. Our second result concerns the problems of existence and uniqueness in this context. Together with a conformal mapping of the unit disk, it can be used to introduce harmonically symmetric slits in more general simply connected domains. 

\begin{theorem}\label{t2}
Suppose that $n$ positive numbers $a_1$, $a_2$, $\ldots$, $a_n$ and $n$ non-negative numbers $b_1$, $b_2$, $\ldots$, $b_n$ are specified with
\[
\sum_{k=1}^n a_k < 1 \quad\mbox{and}\quad \sum_{k=1}^n a_k + \sum_{k=1}^n b_k = 1.
\]
There is an admissible family of real analytic arcs $\Gamma = \{\gamma_1, \gamma_2, \ldots, \gamma_n\}$  such that 
\begin{itemize}
\item[(T2.1)] each arc $\gamma_k$ has harmonic measure $a_k$ at $0$ with respect to $D(\Gamma)$,
\item[(T2.2)] the domain $D(\Gamma)$ is harmonically symmetric in each arc $\gamma_k$ with respect to $0$,
\item[(T2.3)] the endpoints of the arcs on the unit circle, which we denote by $\zeta_1$, $\zeta_2$, $\ldots$, $\zeta_n$, respectively, are in anticlockwise order and, moreover, the harmonic measure at $0$ and with respect to $D(\Gamma)$ of the anticlockwise arc of the unit circle from $\zeta_k$ to $\zeta_{k+1}$ is $b_k$ for $k = 1$, $2$, $\ldots$, $n-1$. 
\end{itemize}
This configuration is unique up to rotation.
\end{theorem}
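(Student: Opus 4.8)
\section*{Proof proposal for Theorem~\ref{t2}}

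The plan is to reformulate the three conditions as a prescription of the boundary correspondence of the Riemann map of $D(\Gamma)$, to realize that correspondence by conformal glueing for existence, and to deduce uniqueness from the conformal removability of analytic arcs. First, for the reformulation, let $\Gamma$ be any admissible family satisfying (T2.1)--(T2.3) and let $f\colon\D\to D(\Gamma)$ be the conformal map with $f(0)=0$ and $f'(0)>0$, now regarding the unit disk as the source. Travelling once counterclockwise around $\partial\D$ one meets, in this cyclic order, abutting pairs of arcs $I_k^{+},I_k^{-}$ on which $f$ parametrizes the two prime-end sides of $\gamma_k$ (the common endpoint of $I_k^{+}$ and $I_k^{-}$ mapping to the free tip of $\gamma_k$), separated by arcs $L_1,\dots,L_n$ on which $f$ parametrizes the boundary arcs of $\partial\D$ between consecutive $\zeta_k$. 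By conformal invariance of harmonic measure, (T2.1) forces $|I_k^{+}|+|I_k^{-}|=2\pi a_k$ and (T2.3) forces $|L_k|=2\pi b_k$, while the Lavrent\'ev formulation of harmonic symmetry recalled above says that (T2.2) holds exactly when, for each $k$, the map $I_k^{+}\to I_k^{-}$ identifying points lying over the same point of $\gamma_k$ is the arc-length isometry folding $I_k^{+}\cup I_k^{-}$ about its midpoint; in particular $|I_k^{+}|=|I_k^{-}|=\pi a_k$. Thus any such $\Gamma$ is encoded, after a rotation of the source disk, by one and the same \emph{fold-and-identity} identification on $\partial\D$: mark $\partial\D$, in the prescribed order, with abutting arcs of lengths $\pi a_1,\pi a_1,2\pi b_1,\pi a_2,\pi a_2,2\pi b_2,\dots$ (possible since $\sum 2\pi a_k+\sum 2\pi b_k=2\pi$, and the $L_k$ do not exhaust $\partial\D$ because $\sum a_k<1$), and glue each abutting pair to itself by the folding isometry.

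For existence I would run this backwards. From the data form the compact quotient $X$ of $\overline{\D}$ by the fold identifications just described. Since each identification is a fold along two abutting boundary arcs by a real-analytic map, $X$ is a simply connected surface carrying a conformal structure (the standard one on $\D$ away from the identified arcs; across them the analytic folding maps glue the charts; at the midpoint of a folded pair the local model is $\zeta\mapsto\zeta^{2}$, producing a smooth interior point, the future slit tip), and $\partial X$ is the Jordan curve obtained from $L_1,\dots,L_n$ by identifying consecutive endpoints in pairs. By uniformization together with boundary regularity, $X$ is conformally the closed unit disk; composing the uniformizing map with a M\"obius automorphism of $\D$ we may send the image of the centre of the source disk to $0$. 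The resulting conformal map $f$ sends the interior of the source disk onto $\D$ with $n$ disjoint real-analytic arcs removed --- the images of the folded pairs --- each avoiding $0$ (the image of an interior point) and having exactly one endpoint on $\partial\D$; these arcs form an admissible family $\Gamma$ with $D(\Gamma)=f(\D)$. Conformal invariance of harmonic measure now yields (T2.1) (the set $f^{-1}(\gamma_k)=I_k^{+}\cup I_k^{-}$ has normalized length $a_k$) and (T2.3) (the set $f^{-1}$ of the arc from $\zeta_k$ to $\zeta_{k+1}$ is $L_k$, of normalized length $b_k$, and the prescribed cyclic order becomes the anticlockwise order of the $\zeta_k$), and the folding maps being isometries yields (T2.2) by the equivalence of the previous paragraph.

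For uniqueness, let $\Gamma$ and $\Gamma'$ be two admissible families with the same data satisfying (T2.1)--(T2.3), with normalized conformal maps $f\colon\D\to D(\Gamma)$ and $f'\colon\D\to D(\Gamma')$; by the first paragraph we may rotate the source disk so that $f$ and $f'$ induce the very same marked partition and folding identifications on $\partial\D$. Then $\Phi:=f'\circ f^{-1}\colon D(\Gamma)\to D(\Gamma')$ is a conformal bijection which, because the boundary of each domain is locally connected, extends to a continuous map $\overline{\D}\to\overline{\D}$ carrying $\partial\D$ onto $\partial\D$ and each $\gamma_k$ onto $\gamma_k'$. As a finite union of analytic arcs in $\D$ with finitely many endpoints on $\partial\D$ is conformally removable, the continuous extension of $\Phi$, holomorphic on $\D\setminus\Gamma$, is holomorphic on all of $\D$; the same applied to $\Phi^{-1}$ shows $\Phi$ is a conformal automorphism of $\D$, and it fixes $0=f'(f^{-1}(0))$, hence is a rotation. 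Therefore $\Gamma'$ is a rotation of $\Gamma$.

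I expect the one real obstacle to be the conformal-glueing step in the existence argument: verifying rigorously that folding the closed disk along finitely many pairs of abutting boundary arcs by real-analytic maps produces a Riemann surface conformally equivalent to the closed disk, with the uniformizing map extending real-analytically across the folded arcs so that the $\gamma_k$ really are real-analytic arcs forming an admissible family, and correctly analyzing the two kinds of distinguished points --- the fold midpoints, where one expects a $\zeta\mapsto\zeta^{2}$ local model producing the interior slit tips, and the points where consecutive arcs $L_k$ abut on $\partial X$. This is precisely the conformal-glueing input of the paper; granted it, the harmonic-measure bookkeeping and the removability argument for uniqueness are routine. (If some $b_k$ vanishes, the arc $L_k$ degenerates to a point and the two neighbouring slits share their endpoint on $\partial\D$; this needs only cosmetic changes, or can be reached by a limiting argument.)
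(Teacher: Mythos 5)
Your reduction of (T2.1)--(T2.3) to a boundary identification problem --- arcs $I_k^{+}$, $I_k^{-}$ of length $\pi a_k$ each, glued by the fold about their common endpoint and separated by free arcs of length $2\pi b_k$ --- is exactly the paper's starting point (the paper writes $I_k=I_k^{+}\cup I_k^{-}$ and the fold as the involution $\phi_k$), and your uniqueness argument is essentially the paper's: both rest on the continuous extension of $f_2\circ f_1^{-1}$ across the slits (available because the two configurations induce the same boundary identification, i.e.\ because of harmonic symmetry) followed by removability of real-analytic arcs, which the paper implements via Morera's theorem. One small point done correctly in both: only one of the two families can be assumed analytic a priori, so removability must be applied on the side of the known-analytic family, as you do.

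The genuine gap is in the existence half, and you have located it yourself: everything rests on the assertion that the quotient of $\overline{\D}$ by the fold identifications is conformally the closed unit disk, and this is precisely the step you do not prove. It is not a bare appeal to uniformization: one must first equip the quotient with a conformal structure compatible with the identifications --- concretely, charts along each folded arc given by the anticonformal reflection $w\mapsto e^{2i\theta_m}/w$ that extends the fold $e^{i\theta}\mapsto e^{i(2\theta_m-\theta)}$, together with the local model $z\mapsto z^{2}$ at the fold midpoint (the future slit tip) --- and then control the boundary behaviour of the uniformizing map so that the resulting slits are real analytic, land at distinct points of the unit circle, and avoid $0$. The paper establishes existence by a different mechanism that bypasses the quotient surface entirely: it constructs a quasiconformal glueing $S=g\circ T$, where $T$ is the Beurling--Ahlfors extension of a piecewise-linear circle homeomorphism onto an equipartitioned circle and $g$ is the conformal map onto the rotationally symmetric model domain $D(\Gamma_1)$ with $n$ equal radial slits, and then corrects $S$ to a conformal glueing by the Measurable Riemann Mapping Theorem; the real analyticity of the arcs and the degenerate case $b_k=0$ (handled by a limiting argument, as you also suggest) come out of that construction. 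Your route via the folded quotient surface can be completed, but as written the central analytic content of the existence statement is assumed rather than established.
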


The plan of the paper is as follows. We briefly describe Goodman's domain and its modification by Beller and Hummel in the next section. In Section~\ref{sec3}, we construct the domain $D_0$ and prove Theorem~\ref{t1}. In order to do so, we use an explicit solution by Fedorov of the P\'olya-Chebotarev problem in the case of four symmetrically placed points. Theorem~\ref{t2} is proved in  Section~\ref{sec4}, using techniques drawn from conformal welding.  Related results on harmonic symmetry are also established in this section. 
%
%
\section{Goodman's domain and the Beller-Hummel domain}\label{sec2}
Ruth Goodman's domain \cite{Good} is constructed in stages. The first stage
consists of the removal from the plane of three radial halflines that start from
the cube roots of unity. The second stage consists of the removal of three
further radial halflines starting from two times the cube roots of $-1$. The
domain $G_2$ formed by the plane minus these six halflines is shown in
Figure~\ref{fig1}. Goodman continues the construction by, at each stage,
removing radial halflines that bisect the sectors formed by previous generations of halflines in such a way as to maintain inradius~ $1$. 

The circle $C_1$ with unit radius and with centre $P_1 = (c,1)$, where
$c=1+\sqrt{2\sqrt{3}-3}$, is tangent to the halfline $[1,\infty]$ and passes
through the tip $2e^{i\pi/3}$ of the halfline above it. Thus the boundary of Goodman's domain includes a halfline with argument $\pi/6$ and one endpoint on $C_1$, together with the successive rotations of this halfline through an angle $\pi/3$. 
\begin{figure}[ht]
\begin{center}
\includegraphics{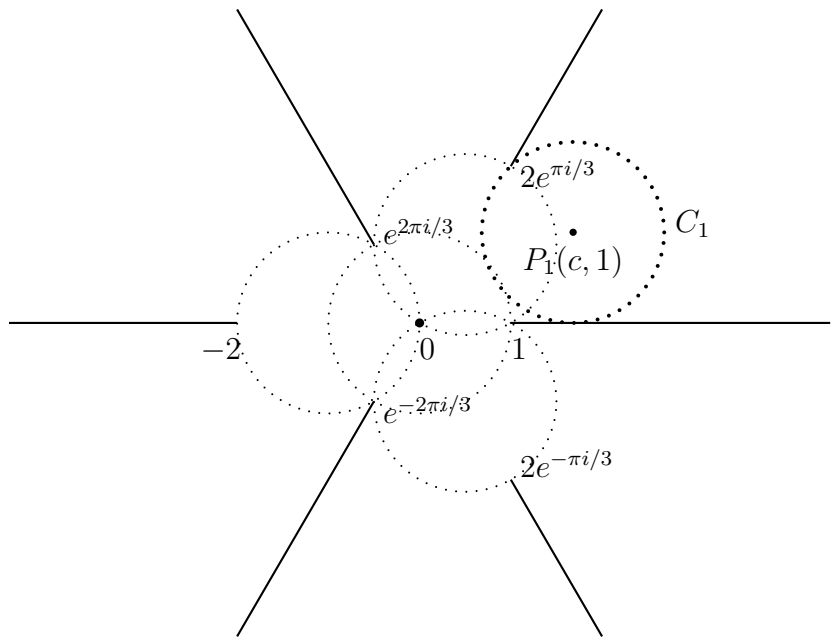}    
\caption{The first two stages of Goodman's domain}\label{fig1}
\end{center}
\end{figure}
The modification of the Goodman domain constructed by Beller and Hummel
\cite{BH} to obtain their upper bound for $\U$ agrees with Goodman's domain up to the
second generation of halflines -- indeed, it is difficult to imagine (but
apparently equally difficult to prove) that the construction of an extremal
domain might begin any differently. Their improved estimate was motivated by the
observation that the circle $C_1$ in Figure~\ref{fig1}  sneaks slightly around
the end of the halfline with angle $\pi/3$ so that its centre does not have argument
$\pi/6$. This led them to vary the angles of the third generation of halfline slits in
Goodman's domain to find an optimal configuration of this type. In their
configuration, the third generation of slits are far from being bisectors of the
six sectors in the domain $G_2$. 
%
%
\section{An improved upper bound for the univalent Bloch-Landau
constant}\label{sec3}
All authors, who either put forward a putative extremal domain for the
Bloch-Landau constant or who seek a numerical upper bound for $\U$, start from
the six-slit plane $G_2$, as described in Section~\ref{sec2}, and proceed by
inserting further radial halflines to divide the sectors as they widen with the
aim of preserving the inradius. It is necessary to truncate at some point when
seeking an upper bound, which we do. It is now clear from \cite{C} (see also the
concluding remark in \cite{BC}) that any new boundary arcs need to be inserted in such a
way that the final domain is harmonically symmetric in each arc. It is not clear
that this can be achieved in an iterative manner, in that the insertion of later
boundary arcs may destroy the harmonic symmetry of earlier arcs. Nevertheless, at least
from a computational point of view, it is natural to begin with the domain $G_2$
and to insert six extra arcs to obtain a domain that is harmonically symmetric
in each new arc with respect to $0$ and  remains symmetric under reflection, and
therefore harmonically symmetric, in each of the original six halflines that
form the boundary of the domain $G_2$. The domain we construct is of the type
shown in Figure~\ref{FigU}. To perform the necessary calculations, we exploit 
the connection between harmonic symmetry and the P\'olya-Chebotarev problem, in particular results of Fedorov \cite{Fed}.

\subsection{Fedorov's results on certain configurations of minimal capacity}
Given $\alpha$ and $c$ such that $0<\alpha\le \pi/2$ and $0<c<2\cos \alpha$,
Fedorov finds the continuum $E(\alpha,c)$  with minimal capacity that contains
each of the points $0$, $c$, $e^{i\alpha}$ and $e^{-i\alpha}$. The typical
extremal configuration is shown in Figure~\ref{fed1fig}.
\begin{figure}[ht]
\begin{center}
\includegraphics{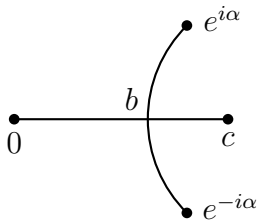}    
\caption{Fedorov's continuum of minimal capacity containing four specified
points: $0$, $e^{i\alpha}$, $e^{-i\alpha}$ and $c$.}
\label{fed1fig}
\end{center}
\end{figure}
The point $b$ is determined explicitly by Fedorov in terms of $c$ and $\alpha$.
Moreover the capacity of the extremal compact set is
\begin{equation}\label{4.1}
\kap\big(E(\alpha,c)\big)= \frac {(1+p)^2 \Theta^2(0)}{4\, c\, \Theta^2(w)}.
\end{equation}
Here $\Theta$ is the Jacobi Theta function \cite[p.\ 577]{AbramStegun}, 
\begin{align*}
p&=\sqrt{1-2c\cos\alpha+c^2},\\
w&=F\left(\arccos\left(\frac{1-p}{1+p}\right);k\right),
\end{align*}
where
\[
k=\sqrt{\frac{p+1-c\cos\alpha}{2p}}
\]
and the function $F$ is an incomplete elliptic integral of the first kind, that
is 
\[
F(x;k)=\int_0^x \frac{dt}{(1-t^2)(1-k^2t^2)}.
\]

\subsection{The required conformal mapping}The mapping function $h$ of the
complement of a compact set $E$ onto the complement of the closed unit disk may
be expanded as 
\beq\label{4.2}
h(z) =  \frac{z}{\kap(E)} + O(1), \quad \mbox{ as } z \to \infty,
\eeq
up to rotation. This provides a link between the problem solved by Fedorov and
the example that will yield an improved upper bound for the Bloch-Landau
constant, in that minimising the capacity of the set therefore corresponds to
maximising the derivative of the mapping $h$ at infinity. Moreover, following
the argument in \cite{C}, the arcs making up the extremal configuration will be
harmonically symmetric at infinity. As noted in the introduction, this latter
observation was first made by Lavrentiev \cite{Lav}. 

We work with domains $\Omega = \Omega_{z_0,R}$ as shown in
Figure~\ref{FigOmega},  where $R>3$, $\vert z_0\vert< R^3$, and the arc
$\gamma_{z_0}$ is chosen so that $\Omega_{z_0,R}$ is harmonically symmetric in
$\gamma_{z_0}$ with respect to $0$. 
\begin{figure}[ht]
\begin{center}
\includegraphics{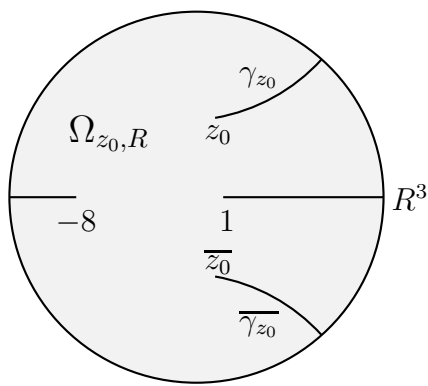}    
\caption{A domain $\Omega_{z_0,R}$}
\label{FigOmega}
\end{center}
\end{figure}
If $g$ is a conformal map of the unit disk $\D$ onto such a domain
$\Omega_{z_0,R}$, with $g(0)=0$, then 
$f(z)=z\sqrt[3]{\frac {g(z^3)}{z^3}}$ is a conformal map of $\D$ onto a domain
$U_{w,R}$ as shown in Figure~\ref{FigU}.
\begin{figure}[ht]
\begin{center}
\includegraphics{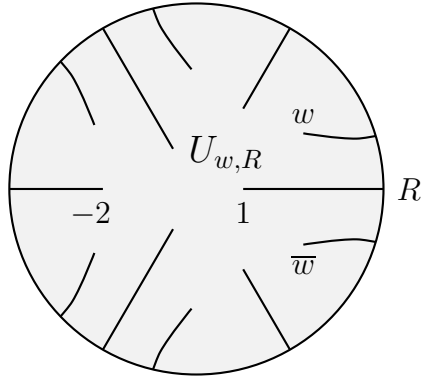}  
\caption{A domain $U_{w,R}$}
\label{FigU}  
\end{center}
\end{figure}
The arcs that appear are all harmonically symmetric, and thus the conformal
mapping of the unit disk onto $U_{w,R}$ is a good candidate for having a
relatively large derivative at the origin. This derivative is
$|f'(0)|=\sqrt[3]{|g'(0)|}$. In order that this provide a useful estimate of the
Bloch-Landau constant, we need to arrange for $U_{w,R}$ to have inradius $1$. We
leave this aside for the moment and show how to use Fedorov's results on
capacity to compute $\vert f'(0)\vert$ for given $z_0$ and $R$. We write $k$ for
the Koebe mapping $k(z) = z/(1-z)^2$  of the unit disk onto the plane slit along
the negative real axis from minus infinity to $-1/4$.

\begin{proposition}\label{Prop2}
We write $f$ for a conformal map of the unit disk $\D$ onto $U_{w,R}$ for
which $f(0) = 0$.  Then, with $z_0 = w^3$,   
\beq\label{4.2a}
\vert f'(0)\vert = \frac{1}{R} \sqrt[3]{\vert \psi(z_0) - \psi(1)\vert\,
\kap\big(E(\alpha,c)\big)}
\eeq
where
\beq\label{4.2b}
\psi(z) = -\frac{1}{k(z/R^3)}
\eeq
and 
\beq\label{4.2c}
e^{i\alpha} = \frac{\psi(z_0) - \psi(1)}{\vert \psi(z_0) - \psi(1)\vert},
\qquad 
c = \frac{\psi(-8) - \psi(1)}{\vert \psi(z_0) - \psi(1)\vert}.
\eeq
\end{proposition}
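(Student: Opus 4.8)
The plan is to exploit the cube-root substitution together with the description \eqref{4.2} of the exterior mapping function in terms of capacity. First I would observe that, by construction, $f'(0) = \sqrt[3]{g'(0)}$, so it suffices to compute $|g'(0)|$ where $g$ maps $\D$ conformally onto $\Omega_{z_0,R}$ with $g(0)=0$. The domain $\Omega_{z_0,R}$ is the plane with the ray $[R^3,\infty]$ removed, together with the harmonically symmetric arc $\gamma_{z_0}$ that reaches from near $z_0$ to a point on that ray (and, in the symmetrized picture, a further slit along $[-8, \ldots]$ coming from the second-generation halflines of $G_2$). The map $\psi(z) = -1/k(z/R^3)$ sends $\C \setminus [R^3,\infty]$ conformally onto $\C \setminus [0,\infty] \cdot(\text{something})$—more precisely onto the plane slit along a ray—normalized so that $\infty \mapsto 0$; I would track exactly where $0$, $z_0$, $-8$ and the tip $1$ (the point $z=R^3$ giving the base of the ray) are sent, which is the content of formulas \eqref{4.2b}–\eqref{4.2c}: after translating by $-\psi(1)$ and scaling by $1/|\psi(z_0)-\psi(1)|$, the three marked points become $0$, $e^{i\alpha}$ and $c$, exactly the configuration in Fedorov's problem.

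Next I would invoke the key structural fact, proved in \cite{C} and recalled in the text: because $\Omega_{z_0,R}$ is harmonically symmetric in $\gamma_{z_0}$ with respect to $0$, under $\psi$ the image of $\gamma_{z_0}$ (together with the image of the $-8$ slit) is harmonically symmetric at $\infty$, hence — being an arc of minimal capacity joining its prescribed endpoints — it must coincide with Fedorov's extremal continuum $E(\alpha,c)$ for the four points $0, e^{i\alpha}, e^{-i\alpha}, c$ (the point $e^{-i\alpha}$ and the full symmetry entering because we have arranged the picture to be symmetric under complex conjugation). Thus $\psi$ followed by the affine normalization carries $\Omega_{z_0,R}$ onto $\C \setminus E(\alpha,c)$, and I would then compose with the exterior Riemann map $h$ of $\C\setminus E(\alpha,c)$ onto $\C \setminus \overline{\D}$ and invert to land back on $\D$. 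Chasing the normalization \eqref{4.2} of $h$ at $\infty$ against the factor $1/\kap$, and keeping careful track of the derivative of the affine map, of $\psi$ at the point corresponding to $0 \in \Omega$ (which is where $z=0$, and $\psi(0)$ must be related back through $\psi(0)-\psi(1)$), and of the cube root, assembles the constant $\frac{1}{R}\sqrt[3]{|\psi(z_0)-\psi(1)|\,\kap(E(\alpha,c))}$.

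Concretely the steps are: (i) write $f'(0) = \sqrt[3]{g'(0)}$ and reduce to computing $|g'(0)|$; (ii) identify $\psi$ and compute the images under $\psi$ of $0$, $1$, $z_0=w^3$, $-8$, verifying \eqref{4.2b}–\eqref{4.2c} and that $\alpha\in(0,\pi/2]$, $0<c<2\cos\alpha$ so Fedorov's theorem applies; (iii) use the harmonic-symmetry characterization to conclude that $\psi(\gamma_{z_0})$ is (an affine image of) $E(\alpha,c)$, so that the composite $\psi\circ g$, renormalized, is the exterior map of $E(\alpha,c)$; (iv) read off $|g'(0)|$ from \eqref{4.2} together with the chain rule for $\psi$ and for the affine normalization; (v) take cube roots. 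The main obstacle I expect is step (iii): one must argue that the two harmonically symmetric arcs in $\Omega_{z_0,R}$ (the arc $\gamma_{z_0}$ and the prong along $[-8,\cdot]$, which together with the ray $[R^3,\infty]$ bound the simply connected domain) are mapped by $\psi$ precisely onto Fedorov's extremal configuration, i.e.\ that harmonic symmetry at $0$ transported to harmonic symmetry at $\infty$ forces the minimal-capacity continuum rather than merely some critical one — this is where the uniqueness built into the P\'olya–Chebotarev/Fedorov solution, and the compatibility of the two arcs' symmetry conditions with the conjugation symmetry of the picture, have to be used carefully. Tracking branch choices of the cube root and the precise correspondence of tips and endpoints under $k$ and $\psi$ is the remaining bookkeeping.
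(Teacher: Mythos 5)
Your proposal follows essentially the same route as the paper: the same auxiliary map $\psi(z)=-1/k(z/R^3)$ normalized by $z\mapsto (\psi(z)-\psi(1))/\vert\psi(z_0)-\psi(1)\vert$ onto Fedorov's configuration, the identification of the image of $\partial\Omega_{z_0,R}$ with $E(\alpha,c)$ via harmonic symmetry at $\infty$ (Lavrentiev's characterization), and then reading off the derivative from the expansion \eqref{4.2} of the exterior map together with Fedorov's formula \eqref{4.1} and the cube root. The step you flag as the main obstacle --- that harmonic symmetry forces the image to be exactly the minimal-capacity continuum --- is precisely the point the paper dispatches with its brief remark that the maps extend continuously to the internal boundary arcs and that all domains are symmetric in the real axis, so your sketch matches the published argument in both structure and substance.
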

\begin{proof}
The map 
\beq\label{4.2d}
\phi(z) = \frac{\psi(z) - \psi(1)}{\vert \psi(z_0) - \psi(1)\vert}, 
\eeq
where $\psi$ is given by \eqref{4.2b}, maps $\Omega_{z_0,R}$ onto the complement
of Fedorov's continuum $E(\alpha, c)$  with $\alpha$ and $c$ given by
\eqref{4.2c}. The harmonic symmetry of arcs is preserved because each mapping
extends continuously to all internal boundary arcs and because the domains
involved are all symmetric with respect to the real axis.  If $h$ is the mapping
of the complement of $E(\alpha,c)$ onto the complement of the unit disk
mentioned in \eqref{4.2}, then a suitable mapping $g$ of $\Omega_{z_0,R}$ onto
the unit disk, with $g(0) = 0$,  is given by 
\beq\label{4.3}
g(z) = \frac{1}{h\big( \phi(z)\big)}.
\eeq
Now $g'(0)$ can be computed explicitly, in terms of $z_0$, $R$ and $h'(\infty)$,
for example by computing the power series for $g$. One obtains
\beq\label{4.4}
\vert g'(0) \vert = \frac{\vert \psi(z_0) - \psi(1)\vert}{R^3\,h^{'}(\infty)}.
\eeq
The value of $h'(\infty)$ is $1/\kap\big(E(\alpha,c)\big)$ and is given
explicitly, in its turn, by Fedorov's result \eqref{4.1}. \end{proof}
\subsection{The choice of $w$ and $R$} 
We know how to build, for any $w$ and $R$, a conformal map $f$ of the unit disk
onto 
$U_{w,R}$ and have a formula for its derivative at $0$. In order that this
provide an upper bound for the univalent Bloch-Landau constant, we must choose
$w$ and $R$ in such a way that 
\begin{itemize}
\item The domain $U_{w,R}$ has inradius one,
\item The derivative $|f'(0)|$ is as big as possible.
\end{itemize}

\begin{proposition}\label{Prop3}
We suppose that $R$ is fixed with $3< R < 4.5$. We set 
\beq\label{4.5}
P_2=(R-1)e^{i\left(\frac{\pi}{3}-\alpha\right)}\quad \mbox{where}\quad \alpha =
\arcsin\big[1/(R-1)\big],
\eeq
and write $d = \vert P_2-P_1\vert$, where $P_1$ is as in Section~\ref{sec2}. We
set $\theta = \arccos(d/2)$ and set  
\beq\label{4.6}
w= P_1+ \frac{1}{d}\,e^{-i\theta}\big(P_2-P_1\big).
\eeq
Then $U_{w,R}$ has inradius one. 
\end{proposition}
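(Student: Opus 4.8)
The plan is to verify the two geometric conditions that force the inradius of $U_{w,R}$ to equal $1$: that every point of $U_{w,R}$ is within distance $1$ of the boundary (so $R_{U_{w,R}}\le 1$), and that the open unit disk centred at $w$ lies inside $U_{w,R}$ (so $R_{U_{w,R}}\ge 1$). By the threefold symmetry coming from the substitution $z\mapsto z^3$ used to pass from $\Omega_{z_0,R}$ to $U_{w,R}$, together with the reflection symmetry in the real axis, it suffices to examine one fundamental sector, say the sector of opening $\pi/3$ bounded by the halfline of argument $0$ and the halfline of argument $\pi/3$ emanating from the second-generation tips. The boundary of $U_{w,R}$ inside this sector consists of the radial slit $[1,\infty)$ on one side, the rotated radial slit through $(R-1)e^{i\pi/3}$ on the other, the truncated outer circle of radius $R$, and the new harmonically symmetric arc $\gamma_w$ whose circle-endpoint is governed by $w$. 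The claim is that the worst point for the ``every point is within distance $1$'' condition is exactly $w$, and that at $w$ the distance to the boundary is precisely $1$, realised simultaneously by tangency to several boundary pieces.

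First I would pin down $P_1$ and $P_2$ and the unit circles $C_1$, $C_2$ centred there. The point $P_1=(c,1)$ with $c=1+\sqrt{2\sqrt3-3}$ is, as recalled in Section~\ref{sec2}, the centre of the unit circle tangent to $[1,\infty)$ and passing through the tip $2e^{i\pi/3}$ of the second-generation slit; so the circle $C_1=\{|z-P_1|=1\}$ is tangent to the lower edge of the sector and pins down the first of the two slits bounding the fundamental sector. Symmetrically, one wants a unit circle $C_2$ centred at $P_2$ that is tangent to the rotated halfline of argument $\pi/3$ and passes through the tip of the slit below it; the formula \eqref{4.5} for $P_2$ is exactly the statement that $P_2=(R-1)e^{i(\pi/3-\alpha)}$ with $\sin\alpha=1/(R-1)$, i.e.\ $P_2$ lies at distance $R-1$ from the origin and at perpendicular distance $1$ from the relevant radial halfline through the tip at distance $R-1$ from $0$ — here is where the hypothesis $3<R<4.5$ enters, to guarantee $\alpha$ is well-defined, that $P_2$ sits in the correct sector, and that the geometric picture of Figure~\ref{FigU} is the valid one. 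Then $w$, defined by \eqref{4.6}, is characterised as the point on the perpendicular bisector of $P_1P_2$ at distance $1$ from each of $P_1$ and $P_2$: indeed $|w-P_1|=|d^{-1}e^{-i\theta}(P_2-P_1)|=1$, and $|w-P_2|=|(P_1-P_2)+d^{-1}e^{-i\theta}(P_2-P_1)|=|P_2-P_1|\,|1-d^{-1}e^{-i\theta}|=d\cdot\frac{|d e^{i\theta}-1|}{d}$; with $\cos\theta=d/2$ one computes $|d e^{i\theta}-1|^2=d^2-2d\cos\theta+1=d^2-d^2+1=1$, so $|w-P_2|=1$ as well. Thus the unit disk centred at $w$ is internally tangent to both $C_1$ and $C_2$; since $C_1$ is tangent to the lower slit and $C_2$ to the upper slit of the sector, and since the new arc $\gamma_w$ is to be placed as the boundary component ``capping off'' the region near $w$ exactly so that $|z-w|=1$ touches it, the disk $\{|z-w|<1\}$ meets no boundary edge and hence lies in $U_{w,R}$: this gives $R_{U_{w,R}}\ge 1$.

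For the reverse inequality I would argue that no disk of radius exceeding $1$ fits in $U_{w,R}$. This is a covering statement: the closed unit disks centred at the $C_1$-, $C_2$-type circles and their rotates, together with the boundary slits themselves, leave no room for a larger disk. Concretely, in the fundamental sector the obstruction is the pair of slits $[1,\infty)$ and its rotate through $(R-1)e^{i\pi/3}$ (capped at their tips) and the new arc $\gamma_w$; a disk of radius $1+\epsilon$ centred at some $\zeta$ would have to avoid all of these, and one checks — using that $\gamma_w$ passes through the tips of the two slits (the points $2e^{i\pi/3}$-type tip and its companion) and is harmonically symmetric — that the ``gap'' between the two slits has width exactly $2$ measured along the segment $P_1P_2$, forcing any inscribed disk centred in that gap to have radius at most $1$, while a disk centred deeper in the sector (nearer the origin or nearer the outer circle $|z|=R$) is cut off by the straight slits or by the outer circular boundary. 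The fact that the construction of $w$ makes $\{|z-w|<1\}$ tangent to $C_1$, $C_2$ and (by design of the harmonically symmetric arc $\gamma_w$, which is where Theorem~\ref{t2} or the explicit Fedorov description enters) also tangent to $\gamma_w$ is precisely the extremality that makes $R_{U_{w,R}}=1$ rather than something larger.

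The main obstacle will be the second (upper) bound on the inradius: showing rigorously that the straight slits together with the single new arc $\gamma_w$ really do block every disk of radius greater than $1$, rather than merely blocking the ``obvious'' candidates. This requires a genuine case analysis over the location of a putative large inscribed disk's centre within the fundamental sector, controlling its distance to each boundary piece; the delicate part is the interaction with the arc $\gamma_w$, whose shape is only known implicitly (through harmonic symmetry / Fedorov's explicit conformal map), so one must extract just enough geometric information — that $\gamma_w$ joins the two slit-tips, stays within the unit disks about $P_1$ and $P_2$ near those tips, and bulges toward the origin in a controlled way — to conclude the bound. The remaining steps (the tangency computations for $w$, the role of $3<R<4.5$ in keeping the configuration non-degenerate, and the reduction by symmetry to one sector) are routine once the picture of Figure~\ref{FigU} is fixed.
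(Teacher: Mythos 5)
Your verification that $w$ lies on both unit circles $C_1$ and $C_2$ (i.e.\ $\vert w-P_1\vert=\vert w-P_2\vert=1$, via $\vert de^{i\theta}-1\vert^2=d^2-2d\cos\theta+1=1$) is correct and matches the paper's reading of \eqref{4.6}. But the surrounding geometry contains genuine errors. First, $C_2$ is \emph{not} the unit circle through the tip of the neighbouring slit: since $\vert P_2\vert=R-1$ and $(R-1)\sin\alpha=1$, $C_2$ is the unit circle internally tangent to $\vert z\vert=R$ and tangent to the halfline of argument $\pi/3$; it passes through no slit tip. Second, $\vert w-P_1\vert=1$ means $w$ lies \emph{on} $C_1$, not that the unit circle centred at $w$ is internally tangent to $C_1$ (tangency would require the centres to be at distance $0$ or $2$). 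Third, and most seriously, your witness for $R_{U_{w,R}}\ge 1$ fails: the harmonically symmetric arc has $w$ as an endpoint, so $w$ is a boundary point of $U_{w,R}$ and the open unit disk centred at $w$ is certainly not contained in the domain. The unit disks that do lie in $U_{w,R}$ are (up to symmetry) those bounded by $C_1$ and $C_2$, which have $w$ on their boundaries. Relatedly, your description of $\gamma_w$ as ``passing through the tips of the two slits'' is wrong: it runs from $w$ out to the circle $\vert z\vert=R$.

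For the upper bound, which you yourself flag as the main obstacle, the decisive idea of the paper is absent from your proposal. The paper introduces a third unit circle $C_3$, tangent to $\vert z\vert=R$ and to the segment $[1,R]$, and observes that, since $w$ is a boundary point of $U_{w,R}$, the inradius is $1$ provided $w$ lies \emph{inside} $C_3$ — the exact position of the arc through $w$ then being irrelevant. Because $C_2$ and $C_3$ meet on the halfline of argument $\pi/6$, this reduces to the elementary check that $\arg w<\pi/6$, which holds precisely in the range $R\le 4.5$ (the same range guaranteeing $d<2$, so that $C_1$ and $C_2$ actually intersect). Your substitute — that ``the gap between the two slits has width exactly $2$ measured along $P_1P_2$'' — is neither established nor sufficient: it does not control disks tangent to $[1,R]$ and to the outer circle $\vert z\vert=R$, which is exactly the family that $C_3$ and the condition $\arg w<\pi/6$ are designed to exclude. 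Without the $C_3$ step (or an equivalent case analysis), the proof of $R_{U_{w,R}}\le 1$ is not there.
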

\begin{figure}[ht]
\begin{center}
\includegraphics{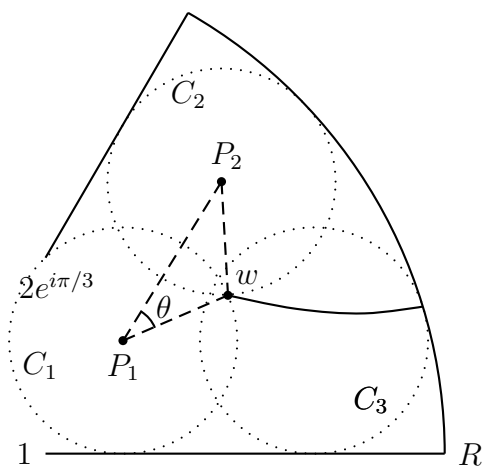}    
\caption{The choice of the point $w$ in Proposition~\ref{Prop3}.}\label{Fig-w}
\end{center}
\end{figure}
\begin{proof}
The circle $C_1$ is the same as that shown in Figure~\ref{fig1}: it has centre
$(c,1)$, where $c=1+\sqrt{2\sqrt{3}-3}$, has unit radius and is tangent to the
slit $[1,R]$ and passes through the tip $2e^{i\pi/3}$ of the slit above it. The
circle $C_2$ with centre $P_2$, as specified in \eqref{4.5}, and of unit radius
is tangent to the circle $\vert z \vert=R$ and tangent to the halfline of
argument $\pi/3$.  It is elementary  to check that if $R\leq4.5$ then $d = \vert
P_2-P_1\vert < 2$, so that these two circles meet, as in Figure~\ref{Fig-w}. The
position of the intersection point $w$ can be computed explicitly, which results in \eqref{4.6}.

Let $C_3$ be the circle of unit radius that is tangent to the circle $\vert z
\vert=R$ and also tangent to the line segment $[1,R]$. Considering that $w$ lies
on its boundary, the domain $U_{w,R}$ will have inradius $1$ if the circle $C_3$
contains the point $w$ (the position of the harmonically symmetric arc through
$w$ being irrelevant to this consideration). In fact the circles $C_2$ and $C_3$
meet at points on the halfline with argument $\pi/6$. Thus $w$ lies inside the
circle $C_3$ if it has argument less than $\pi/6$: it is elementary to check
that this is the case if  $R\leq 4.5$. 
\end{proof}

\begin{proof}[Proof of Theorem~\ref{t1}]
With the choice $R=4.0546358$ in Proposition~\ref{Prop3}, it is then the case
that $U_{w,R}$ has inradius $1$ and Proposition~\ref{Prop2} can be used to
compute the derivative of the conformal map $f$ of the unit disk $\mathbb{D}$
onto $U_{w,R}$ for which $f(0)=0$. This leads to the value $0.65639361315219$,
correct to 10 decimal places, for this derivative, which proves
Theorem~\ref{t1}.

For the actual picture of the domain, shown below in Figure~\ref{real}, 
one has to draw Fedorov's domain and transport it to $U_{w,R}$ with the inverse of the
map $\phi$ given in \eqref{4.2d}. The key point for drawing the domain is to
compute the curvilinear arc of $E(\alpha,c)$. The point $b$ is given explicitly
by Fedorov \cite[Formula 6]{Fed}: 
\[
 b(\alpha,c)=\sqrt{p}\, \frac{\Theta'(w)}{\Theta(w)} + \frac{c}{p+1}.
\]
The quotient $\Theta'(w)/\Theta(w)$ is the Jacobi Zeta function and can be
computed numerically (see Abramovitz and Stegun, \cite[p.\ 578]{AbramStegun}).
To determine the arcs that grow from $b$ to $e^{i\alpha}$ one integrates the
quadratic differential equation $z'(t)^2 Q(z(t))=1$, where 
\[
 Q(z)=\frac{(z-b)^2}{z(z-c)(z^2-2z\cos\alpha +1)},
\]
\cite[Formula 5]{Fed} -- we point out for the reader's convenience that there is
a typograhic error in the referenced formula in that the term $z^2 -2c\cos\alpha
+1$ in the denominator of the quadratic differential should read $z^2
-2z\cos\alpha +1$: Fedorov's Formula~(14) for $Q$ is correct. 
\end{proof}
\section{Harmonic symmetry and conformal glueing}\label{sec4}

\subsection{Proof of Theorem~\ref{t2}} We first prove the existence of a family of arcs with the required properties, and postpone a proof of the uniqueness statement. We begin by assuming that each $b_k$ is strictly positive. 

We divide the unit circle into $2n$ arcs $I_1$, $J_1$, $I_2$, $J_2$, $\ldots$,  $I_n$, $J_n$, in anti-clockwise order so that $\vert I_k\vert  = 2\pi a_k$, $\vert J_k\vert  = 2\pi b_k$, for $1 \leq k \leq n$. 
We write $\phi_k$ for the following involution; $\phi_k : I_k \to I_k$ so that $\zeta$ and $\phi_k(\zeta)$ are at the same distance from the centre of $I_k$, but lie on opposite sides of the centre. Our goal is to produce a conformal map $f$ of $\D$ into $\D$, continuous on the closure of $\D$, that glues each of these involutions, in that 
\beq\label{3.1}
f(\zeta) = f\big(\phi_k(\zeta)\big),\quad \zeta\in I_k, \quad 1 \leq k \leq n.
\eeq
We produce a quasiconformal glueing to begin with, and then correct this to a conformal glueing in a  standard way (see, for example, \cite[Remark~8]{Bishop}). 

We divide a second unit circle into $2n$ equal arcs $\tilde I_1$, $\tilde J_1$, $\tilde I_2$, $\tilde J_2$, $\ldots$,  $\tilde I_n$, $\tilde J_n$, in anti-clockwise order. Next we construct a quasi-symmetric homeomorphism $T$ of the first unit circle to the second unit circle such that $T(I_k) = \tilde I_k$, $T( J_k) = \tilde J_k$ for each $k$ and $T$ is linear on each interval $I_k$ and $J_k$. By the Beurling-Ahlfors Extension Theorem, $T$ may be extended to a quasiconformal map of $\D$ onto $\D$, which we again call $T$, with $T(0) = 0$. 

Next we write $\Gamma_1$ for the admissible family of arcs formed by the straight line segment $\tilde\gamma_1 = [r,1]$, together with is rotations $\tilde\gamma_{k+1} = e^{2\pi k i/n} \tilde\gamma_1$, $k = 1$, $2$, $\ldots$, $n-1$, where $r$ is chosen so that $\omega\big(0,\tilde\gamma_1;D(\Gamma_1)\big) = 1/(2n)$. We write $g$ for the conformal map of $\D$ onto $D(\Gamma_1)$ for which $g(0)=0$ and $g(\tilde I_k) = \tilde\gamma_k$, for $1\leq k \leq n$. Then 
$$
S = g \circ T
$$
is a quasiconformal map of the unit disk onto $D(\Gamma_1)$. The map $S$ extends continuously to the boundary of $\D$. Tracing the boundary correspondence under the mappings $T$, and then $g$, 
shows that it is a quasiconformal glueing of the intervals $I_k$, in that
\[
S(\zeta) = S\big(\phi_k(\zeta)\big),\quad \zeta\in I_k, \quad 1 \leq k \leq n.
\]
By the Measurable Riemann Mapping Theorem,  we can now correct $S$ to a conformal glueing by making a quasiconformal map $R$ of $\D$ onto $\D$ for which $R(0) = 0$ and 
\[
f = R \circ S
\]
is conformal. Then \eqref{3.1} holds. The arcs we are looking for are then $\gamma_k = R(\tilde\gamma_k)$, for $1 \leq k \leq n$ and $\Gamma = \{\gamma_1, \gamma_2, \ldots, \gamma_n\}$. Since $f$ is conformal and $f(0) = 0$, it preserves harmonic measure at $0$. Thus 
$$
\omega\big(0,\gamma_k;D(\Gamma)\big) = \omega\big(0,f^{-1}(\gamma_k);\D\big) =  \omega\big(0,I_k;\D) = a_k. 
$$
This is (T2.1). Moreover, since $f$ is a conformal glueing on the interval $I_k$, the domain $D(\Gamma)$ is harmonically symmetric in $\gamma_k$ with respect to $0$, which is (T2.2). Finally, $f$ maps each arc $J_k$ on the unit circle onto the anti-clockwise arc of the unit circle joining the endpoints $\zeta_k$ of $\gamma_k$ and $\zeta_{k+1}$ of $\gamma_{k+1}$ on the unit circle, so that (T2.3) follows. The fact that the arcs are real analytic comes from the control we have on the quasiconformality of the maps constructed.

This construction depends continuously on the parameters $a_k$ and $b_k$. In the limit as one or more  of the parameters $b_k$ approach zero, it leads to a configuration satisfying (T2.1), (T2.2) and (T2.3) in which two or  more of the curves have a common endpoint on the unit circle. This covers the proof of existence in all cases in Theorem~\ref{t2}.

We now deal with the uniqueness of the configuration we have just now constructed. We suppose that $\Gamma^1 = \{\gamma^1_1, \gamma^1_2, \ldots, \gamma^1_n\}$ and $\Gamma^2 = \{\gamma^2_1, \gamma^2_2, \ldots, \gamma^2_n\}$ are sets of admissible arcs that satisfy (T2.1), (T2.2) and (T2.3) (with $\Gamma$ replaced by $\Gamma^1$ and by $\Gamma^2$ as necessary).  We assume that the arcs in $\Gamma^1$ are real analytic, but those in $\Gamma^2$ need not be. Having set up the intervals $I_1$ to $I_n$ and $J_1$ to $J_n$ as in the proof of existence, we consider conformal mappings $f_1\colon \D \to D(\Gamma^1)$ and $f_2\colon \D \to D(\Gamma^2)$ with $f_1(0) = 0$, $f_2(0)=0$ and with $f_1(I_k) = \gamma_k^1$, $f_2(I_k) = \gamma_k^2$, for each $k$ between $1$ and $n$. Then $f = f_2\circ f_1^{-1}$ is a conformal map from $D(\Gamma^1)$ to $D(\Gamma^2)$. Moreover, $f$ extends continuously from $D(\Gamma^1)$ to $\D$ because of the harmonic symmetry. By the regularity of the arcs in $\Gamma^1$, Morera's Theorem is applicable and we may deduce that $f$ extends to a conformal self map of the disk $\D$ with $f(0)=0$. Hence $f$ is a rotation and the uniqueness statement follows. 

\subsection{A variation on Theorem~\ref{t2}}We describe a version of Theorem~\ref{t2} in which the harmonically symmetric arcs again have specified harmonic measures, but in which one specifies the lengths, rather than the harmonic measures, of the arcs on the unit circle that are formed by the endpoints $\zeta_k$ of the arcs $\gamma_k$. In other words, the position of the endpoints of the arcs $\gamma_k$ on the unit circle may be specified. 
\begin{theorem}\label{t3}
Suppose that $n$ positive numbers $a_1$, $a_2$, $\ldots$, $a_n$, with
$\sum_{k=1}^n a_k < 1$, and $n$ points $\zeta_1$, $\zeta_2$, $\ldots$, $\zeta_n$ in anticlockwise order on the unit circle, are specified. Then there is an admissible family of real analytic arcs $\Gamma = \{\gamma_1, \gamma_2, \ldots, \gamma_n\}$ such that 
\begin{itemize}
\item[(T3.1)]each arc $\gamma_k$ has harmonic measure $a_k$ at $0$ with respect to $D(\Gamma)$,
\item[(T3.2)]the domain $D(\Gamma)$ is harmonically symmetric in each arc $\gamma_k$ with respect to $0$,
\item[(T3.3)] the endpoint of $\gamma_k$ on the unit circle is $\zeta_k$, for each $k$. 
\end{itemize}
\end{theorem}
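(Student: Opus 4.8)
The plan is to deduce Theorem~\ref{t3} from Theorem~\ref{t2} by a parameter-counting and continuity argument, treating the unknowns $b_1, b_2, \ldots, b_n$ (the harmonic measures of the arcs on the unit circle between consecutive endpoints) as the quantities to be solved for, subject to the constraint $\sum_k a_k + \sum_k b_k = 1$. Concretely, write $\beta_k$ for the length $2\pi$ times the harmonic measure we wish to prescribe, i.e. the given angular gaps between $\zeta_k$ and $\zeta_{k+1}$; these are fixed positive numbers with $\sum_k \beta_k = 2\pi$. For each admissible choice of $(b_1, \ldots, b_n)$ with $b_k \geq 0$ and $\sum b_k = 1 - \sum a_k$, Theorem~\ref{t2} produces a configuration $\Gamma(b)$, unique up to rotation, and hence a well-defined vector of \emph{harmonic-measure gaps} which in turn, via the conformal map $f$, correspond to genuine angular gaps $\delta_k(b)$ between the endpoints $\zeta_k$ on the unit circle (note that harmonic measure on the circle at $0$ is normalized arclength, so harmonic measure and angle on the circle differ only by the factor $2\pi$; the subtlety is that the $\zeta_k$ are endpoints of slits, and the harmonic measure of the circular arc between them need not equal its normalized length because the slit geometry distorts things — so $\delta_k(b)$ is not simply $2\pi b_k$). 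The goal is to find $b$ so that $\delta_k(b) = \beta_k$ for all $k$, after a rotation.

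First I would fix the rotational ambiguity, say by demanding $\zeta_1 = 1$, and regard the assignment $b \mapsto (\delta_1(b), \ldots, \delta_n(b))$ as a continuous map $\Phi$ from the simplex $\Sigma = \{b : b_k \geq 0, \sum b_k = 1 - \sum a_k\}$ to the simplex $\{\delta : \delta_k \geq 0, \sum \delta_k = 2\pi\}$. Continuity of $\Phi$ follows from the continuous dependence of the construction in Theorem~\ref{t2} on its parameters, which is asserted in the proof of that theorem (the quasiconformal data vary continuously, and the normalized solution of the Beltrami equation depends continuously on the Beltrami coefficient). Then I would argue that $\Phi$ is surjective onto the open simplex, ideally by showing it is a homeomorphism, or at least that it has the right degree. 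The natural approach is a boundary/degree argument: examine what happens as $b$ approaches a face of $\Sigma$ where some $b_k \to 0$. In that limit, as noted in the existence proof, two endpoints $\zeta_k$ and $\zeta_{k+1}$ merge, so $\delta_k(b) \to 0$; thus $\Phi$ maps the face $\{b_k = 0\}$ of $\Sigma$ into the face $\{\delta_k = 0\}$. This face-to-face correspondence is exactly what is needed to conclude, via a standard topological-degree or Brouwer-type argument (or an inductive argument on $n$, using the lower-dimensional cases on the faces), that $\Phi$ is surjective onto the open simplex, hence every prescribed $(\beta_k)$ with all $\beta_k > 0$ is attained.

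For the uniqueness-flavoured part and to upgrade surjectivity to bijectivity (if one wants it — Theorem~\ref{t3} as stated only claims existence, so strictly this is optional), I would try to show $\Phi$ is injective: if two parameter vectors $b, b'$ gave configurations with the same endpoint gaps, then after rotation the two families $\Gamma$, $\Gamma'$ satisfy (T3.1)–(T3.3) with the \emph{same} $\zeta_k$, and one would like to run the Morera/conformal-welding uniqueness argument from the proof of Theorem~\ref{t2} verbatim; the only change is that the intervals $I_k$ on the circle are now determined by the $a_k$ together with the fixed endpoints $\zeta_k$ (which pin down where each $I_k$ sits), rather than by the $a_k$ and $b_k$ — the same conformal map $f = f_2 \circ f_1^{-1}$ extends across the arcs by harmonic symmetry, Morera applies by real-analyticity of the arcs in $\Gamma^1$, and $f$ is forced to be a rotation fixing $\zeta_1 = 1$, hence the identity. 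That gives injectivity and hence $b = b'$.

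The main obstacle I expect is the surjectivity step: establishing rigorously that the continuous map $\Phi$ covers the whole open simplex. Continuity alone is not enough, and one must genuinely exploit the boundary behaviour (faces map to faces) together with a degree argument, or instead prove injectivity first and then invoke invariance of domain plus the boundary analysis to get surjectivity. The delicate point within this is a careful analysis of the degenerate limits $b_k \to 0$: one must verify not only that the relevant two endpoints coalesce but that the rest of the configuration converges to the $(n-1)$-arc configuration of the appropriate lower-dimensional problem, so that the induction closes. Making the limiting/convergence argument precise — controlling the conformal welding uniformly as a parameter degenerates — is where the real work lies; the degree-theory bookkeeping afterwards is routine.
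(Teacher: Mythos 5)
Your proposal follows essentially the same route as the paper: fix the $a_k$, let the $b_k$ vary over the simplex $\sum b_k = 1-\sum a_k$, and consider the continuous map sending $b$ to the vector of angular gaps between the endpoints on the circle; the key property that a gap vanishes if and only if the corresponding $b_k$ vanishes gives the face-to-face correspondence, and the paper then concludes surjectivity by exactly the inductive winding-number (degree mod 2) argument on faces that you describe. Your additional injectivity discussion is not needed for the statement and is not in the paper, but the core existence argument matches.
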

\begin{proof}
We write $b$ for $1-\sum_1^n a_k$. We consider all possible configurations in Theorem~\ref{t2} in which the numbers $a_1$, $a_2$, $\ldots$, $a_n$ are as specified and the non-negative numbers $b_1$, $b_2$, $\ldots$, $b_n$ are allowed to vary subject to $ \sum_1^n{b_k} = b$. For a permissible choice of the parameters $b_k$, $k=1$, $2$, $\ldots$, $n$, we denote by $x_k$ the endpoint of the resulting arc $\gamma_k$ on the unit circle, and we write $l_k$ for the length of the anticlockwise arc of the unit circle between $x_k$ and $x_{k+1}$ (with $x_{n+1} = x_1$). In this way, we have a map $T$ from the simplex 
\[
\Sigma_1 = \left\{ (b_1, b_2, \ldots , b_n): b_k \geq 0 \mbox{ and } \sum_1^n{b_k} = b\right\}
\]
to the simplex 
\[
\Sigma_2 = \left\{ (l_1, l_2, \ldots , l_n): l_k \geq 0 \mbox{ and } \sum_1^n{l_k} = 2\pi\right\}
\]
given by $T(b_1, b_2, \ldots , b_n) = (l_1, l_2, \ldots , l_n)$. The map $T\colon \Sigma_1 \to \Sigma_2$ is continuous. It has the key property that $l_k$ is zero  if and only if $b_k$ is zero, from this it follows that each vertex, edge, and $i$-face of $\Sigma_1$ is mapped into the corresponding vertex, edge, or $i$-face of $\Sigma_2$. The proof will be complete once it is shown that the map $T$ is onto $\Sigma_2$, for then (T3.3) will hold after a rotation if $l_k$ is chosen to be the arc length between $\zeta_k$ and $\zeta_{k+1}$ on the unit circle (with $\zeta_{n+1} = \zeta_1$). 

Let us first consider any two vertices of the simplex $\Sigma_2$ and the edge $e$ joining them. The pre-images of these vertices under $T$ are vertices of $\Sigma_1$, and the image of the edge joining these vertices lies in the edge $e$. By continuity of the map, $T$ is onto $e$. We can now proceed inductively. We consider an $(i+1)$-face $f_1$ of $\Sigma_1$, the $i$-faces that bound it, and the corresponding $(i+1)$-face $f_2$ of $\Sigma_2$.  Assuming that $T$ maps each of these $i$-faces onto the corresponding $i$-face of $\Sigma_2$, it follows from the key property of $T$ that the image of the $i$-faces bounding $f_1$ has winding number $1\,({\rm mod\,} 2)$ about each interior point of $f_2$. 
Consequently, $T$ is onto $f_2$. 
\end{proof}

\subsection{Harmonically symmetric arcs don't grow}It is natural to ask whether the harmonically symmetric arcs $\gamma_k$ in Theorem~\ref{t3} can be described by means of a differential equation of L\"owner type. As our final result, we show that this is not possible, even in the case of two harmonically symmetric arcs. We note that if the data in Theorem~\ref{t3} is symmetric with respect to $\R$, then, by construction, the resulting harmonically symmetric curves can be taken symmetric with respect to $\R$.

\begin{theorem}\label{t4}
We suppose that $\zeta$ lies on the upper half of the unit circle and that $a_1$ and $a_2$ lie in $(0,1)$ with $a_1< a_2$. Theorem~\ref{t3} is applied twice to construct two pairs of harmonically symmetric curves, $\{\gamma_1, \overline\gamma_1\}$ and $\{\gamma_2, \overline\gamma_2\}$ respectively, the first from the data $a_1$, $a_1$, $\zeta$, $\overline{\zeta}$, and the second from the data $a_2$, $a_2$, $\zeta$, $\overline{\zeta}$. Then $\gamma_1 \not \subseteq \gamma_2$ except in the case when $\zeta = i$. 
\end{theorem}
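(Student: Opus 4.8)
The plan is to argue by contradiction: suppose $\gamma_1\subseteq\gamma_2$. Since both configurations are symmetric with respect to $\R$ we also have $\overline\gamma_1\subseteq\overline\gamma_2$, so $D_2:=\D\setminus(\gamma_2\cup\overline\gamma_2)\subseteq D_1:=\D\setminus(\gamma_1\cup\overline\gamma_1)$. Write $\delta=\gamma_2\setminus\gamma_1$ and $\overline\delta=\overline\gamma_2\setminus\overline\gamma_1$ for the two ``new'' sub-arcs, $p$ for the interior endpoint of $\gamma_1$ (which is an interior point of $\gamma_2$) and $q$ for the interior endpoint of $\gamma_2$. Our target is to show that all of this forces $\zeta=i$.

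The first step uses the geodesic reformulation of harmonic symmetry recalled in the first section. Since $D_1$ is harmonically symmetric in $\gamma_1$ with respect to $0$, the arc $\gamma_1$ lies on a hyperbolic geodesic $L$ of $\Omega_1:=D_1\cup\gamma_1=\D\setminus\overline\gamma_1$ passing through $0$. Because $p$ is an interior point of $\Omega_1$, the geodesic $L$ continues analytically past $p$; since (by Theorem~\ref{t3}) $\gamma_2$ is a single real analytic arc containing $\gamma_1$ and passing through $p$, and since $\gamma_2\subseteq\Omega_1$, two analytic arcs sharing the sub-arc of $\gamma_1$ near $p$ must coincide there, and an analytic continuation along $\delta$ then shows that $L$ in fact contains all of $\gamma_2$. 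Equivalently, the non-symmetric domain $\D\setminus(\gamma_2\cup\overline\gamma_1)$ is harmonically symmetric in $\gamma_2$ with respect to $0$. The real work lies beyond this: we must extract rigidity from the fact that $\gamma_2$ is harmonically symmetric with respect to $0$ in two different domains, $D_2$ and $\D\setminus(\gamma_2\cup\overline\gamma_1)$, whose other slits are nested.

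To do this I would pass to Riemann maps. Let $R\colon\D\to\Omega_1$ be conformal with $R(0)=0$, normalised so that $R^{-1}(L)$ is the real diameter; harmonic symmetry of $D_1$ in $\gamma_1$ forces $R^{-1}(\gamma_1)$ to be a radial segment $[r_1,1]$, by the description of harmonically symmetric arcs in a once-slit disk from the first section. Set $\tau=R^{-1}(\overline\delta)$, an admissible real analytic arc in $\D$ since $\overline\delta$ is real analytic and lies in $\Omega_1$. Let $R'\colon\D\to\D\setminus\overline\gamma_2$ be the analogously normalised Riemann map for the smaller domain. Then $h=R'^{-1}\circ R$ is a conformal map of $\D\setminus\tau$ onto $\D$ fixing $0$ and carrying a sub-interval of the real axis into the real axis. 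Schwarz reflection across this interval extends $h$, and comparing the extended map with the behaviour of $h$ at the interior endpoint of $\tau$ (where $|h|\to1$) shows that $\tau$ cannot leave the real axis; hence $\tau\subseteq\R$ and therefore $\overline\delta=R(\tau)\subseteq L$. Thus the geodesic $L$ of $\D\setminus\overline\gamma_1$ contains $\gamma_1\cup\delta\cup\overline\delta$, with endpoints $\zeta\in\partial\D$ and $\overline p$; reflecting in $\R$, the geodesic $\overline L$ of $\D\setminus\gamma_1$ contains $\overline\gamma_1\cup\overline\delta\cup\delta$, with endpoints $\overline\zeta$ and $p$.

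Since $L$ and $\overline L$ share the sub-arc $\delta$ and are analytic, they coincide on $\Omega_1\cap\overline{\Omega_1}=D_1$. The common arc $L\cap D_1=\overline L\cap D_1$ is therefore invariant under conjugation, joins the conjugate points $p$ and $\overline p$, and passes through $0$; conjugation acts on it as an orientation-reversing involution fixing only $0$ (it cannot be the identity, for then $\delta\subseteq\R$, contradicting that $\gamma_2$ lies in the open upper half plane). Combining this with the radiality of $R^{-1}(\gamma_1)=[r_1,1]$ and of $\tau=R^{-1}(\overline\delta)$, I would conclude that the configuration $D_1$ is invariant not only under $z\mapsto\overline z$ but also under $z\mapsto -z$, hence under the reflection $z\mapsto -\overline z$ in the imaginary axis. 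Then $\{\gamma_1,\overline\gamma_1\}$ is invariant under $z\mapsto -z$; the possibility $-\gamma_1=\gamma_1$ is excluded (an admissible arc cannot pass through $0$, and the circle-endpoint $-\zeta$ cannot equal the interior endpoint $p$), so $-\gamma_1=\overline\gamma_1$, i.e.\ $\gamma_1=-\overline\gamma_1$; the endpoint $\zeta$ is then carried by $z\mapsto -\overline z$ to an endpoint of $\gamma_1$, which must be $\zeta$ itself, so $-\overline\zeta=\zeta$ and $\zeta=i$ (and for $\zeta=i$ the arcs $\gamma_1,\overline\gamma_1$ are indeed radial segments on the imaginary axis, for which the nesting does occur). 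The step I expect to be the main obstacle is precisely this last promotion of the local and one-sided geodesic information about $L$ and $\overline L$ to the full $z\mapsto -z$ symmetry of the configuration; that is where the interplay between the two harmonic-symmetry conditions, the symmetry in $\R$, and the prescribed harmonic measures must be used carefully.
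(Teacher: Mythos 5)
Your overall strategy is the one the paper uses: translate harmonic symmetry into the statement that the arcs lie on hyperbolic geodesics through $0$, show that the geodesic of the once-slit domain attached to the smaller configuration must absorb first $\gamma_2$ and then the new piece $\overline\gamma_2\setminus\overline\gamma_1$ of the conjugate slit, and conclude that this geodesic runs from $\zeta$ all the way to the tip of $\overline\gamma_1$. Your routes to these facts differ in detail (analytic continuation of the geodesic past the tip $p$, and a reflection $h^*(z)=\overline{h(\bar z)}$ of the transition map $h=R'^{-1}\circ R$, where the paper instead uses that $g=f_1\circ f_2^{-1}$ is real on $(-1,1)$ and then a two-case analysis of where the inner geodesic terminates), and they can be made to work; note however that, as literally written, comparing $h$ with its reflection only at the \emph{interior endpoint} of $\tau$ yields only that the conjugate of that endpoint lies on $\tau$, which is vacuous when the endpoint is real. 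To get $\tau\subseteq\R$ you must run the comparison at \emph{every} point $w$ of $\tau$ (approaching $w$ from the connected set $\D\setminus(\tau\cup\overline\tau)$, where $|h|\to 1$ while $|h^*|$ stays bounded away from $1$ unless $\overline w\in\tau$), and then use that a conjugation-invariant simple arc with one endpoint interior and one on the circle must be real.

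The genuine gap is the final step, which you yourself flag. From the fact that the geodesic $L$ of $\D\setminus\overline\gamma_1$ joins $\zeta$ to the tip of $\overline\gamma_1$ and that its middle portion is conjugation-invariant, you assert that $D_1$ is invariant under $z\mapsto -z$; the ``radiality of $R^{-1}(\gamma_1)$ and of $\tau$'' only describes the picture in the $R$-coordinate and says nothing about how $R$ interacts with $z\mapsto -z$, so no symmetry of $D_1$ itself follows. The information you actually have at this point is a statement about harmonic measure, not about Euclidean symmetry: since $L$ is a geodesic through $0$, it bisects the harmonic measure of $\partial(\D\setminus\overline\gamma_1)$ at $0$, and combining this with the harmonic symmetry of $D_1$ in $\gamma_1$ and $\overline\gamma_1$ one deduces that the two arcs of the unit circle between $\zeta$ and $\overline\zeta$ have \emph{equal} harmonic measure at $0$ with respect to $D_1$. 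The decisive ingredient, absent from your proposal, is then the uniqueness statement of Theorem~\ref{t2}: for $n=2$ with symmetric data $a_1,a_1,b_1,b_1$ the only configuration up to rotation is the disk with the two ends of a diameter removed, and the reflection symmetry of $D_1$ in $\R$ then forces that diameter to be the imaginary axis, i.e.\ $\zeta=i$. Without invoking this uniqueness (or an equivalent rigidity statement), the geodesic facts alone do not pin down the configuration, so your concluding paragraph cannot be completed as written.
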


\begin{proof}
Let us suppose that $\gamma_1 \subseteq \gamma_2$. Since $\D\setminus \{\gamma_1 \cup \bar\gamma_1\}$ is harmonically symmetric in $\bar\gamma_1$ with respect to $0$, $\bar\gamma_1$ is a geodesic arc in $\D\setminus\gamma_1$. Since $\D \setminus \{\gamma_2 \cup \bar\gamma_2\}$ is harmonically symmetric in $\bar\gamma_2$ with respect to $0$, $\bar\gamma_2$, and hence its subarc $\bar\gamma_1$, is a geodesic arc in $\D\setminus\gamma_2$. Thus $\bar\gamma_1$ is a geodesic arc with respect to both the domain $\D\setminus\gamma_1$ and the domain $\D\setminus\gamma_2$. 

We write $\Gamma_1$ for the full geodesic in $\D\setminus\gamma_1$ of which $\bar\gamma_1$ is a part and write $\Gamma_2$ for the full geodesic in $\D\setminus\gamma_2$ of which $\tilde\gamma_1$ is a part. Both $\Gamma_1$ and $\Gamma_2$ pass through the origin. We map $\D\setminus\gamma_1$ onto the unit disk by a conformal map $f_1$ so that $\Gamma_1$ is mapped onto $(-1,1)$ and map  $\D\setminus\gamma_2$ onto the unit disk by a conformal map $f_2$ so that $\Gamma_2$ is mapped onto $(-1,1)$. 

We consider the map $g = f_1\circ f_2^{-1}$, which maps the unit disk into itself conformally. Moreover, $g$ is real-valued on $f_2(\bar\gamma_1)$, which itself is a subinterval of $(-1,1)$.  Thus $g$ is real-valued on the entire interval $(-1,1)$. Since $f_2^{-1}(-1,1) = \Gamma_2$, it then follows that 
\[
f_1\left( \Gamma_2\right) \subseteq (-1,1) = f_1\left(\Gamma_1\right),
\]
so that 
\[
\Gamma_2 \subseteq \Gamma_1.
\]

There are now two possible geometric situations to consider, depending on where the geodesic $\Gamma_2$ might end (both it and $\Gamma_1$ begin at the endpoint of $\bar\gamma_1$ on the unit circle). Suppose that $\Gamma_2$ were to end at a boundary point of $\D\setminus\gamma_1$. In this case, $\Gamma_2$ and $\Gamma_1$ coincide and the final step is to recall that the harmonic measure of the boundary is split evenly in two along a hyperbolic geodesic. Let $E$ be that part of the boundary of the domain $\D\setminus\gamma_2$ that is bordered by the common endpoints of $\Gamma_2$ and $\Gamma_1$ and does not include $\gamma_2\setminus\gamma_1$. At any point $P$ on $\Gamma_2$, for example $0$, the harmonic measure of $E$ at $P$ with respect to $\D\setminus\gamma_2$ is $1/2$ since $\Gamma_2$ is a hyperbolic geodesic for $\D\setminus\gamma_2$. Since $P$ lies on $\Gamma_1$, which is a hyperbolic geodesic for $\D\setminus\gamma_1$, $E$ also has harmonic measure $1/2$ at $P$ with respect to $\D\setminus\gamma_1$. However, $\D\setminus\gamma_1$ contains $\D\setminus\gamma$ strictly, so this is impossible. 
We conclude that the geodesic $\Gamma_2$ must end at a point of $\gamma_2\setminus\gamma_1$. 

Under the map $f_1$, the geodesic $\Gamma_1$ relative to the domain $\D\setminus\gamma_1$ is mapped onto the real axis in the unit disk and the domain $\D\setminus\gamma_2$ is mapped to the domain 
\[
\Omega = \D \setminus \left[f_1 \left(\gamma_2\setminus\gamma_1\right)\right].
\]
The arc $f_1 \left(\gamma_2\setminus\gamma_1\right)$ begins on the unit circle and ends at an interior point of the unit disk. Moreover, $f_1(\Gamma_2)$ is a geodesic in $\Omega$ and is part of the real axis, and so $\Omega$ must be symmetric under reflection in the real axis. This forces $f_1 \left(\gamma_2\setminus\gamma_1\right)$ to be part of the interval $(-1,1)$. 
Pulling this picture back under $f_1^{-1}$, we find that 
\[
\Gamma_1 = \Gamma_2 \cup \left(\gamma_2\setminus\gamma_1\right),
\]
so that the geodesic $\Gamma_1$ of $D_1 = \D\setminus \{\gamma_1 \cup \bar\gamma_1\}$ is an arc that joins the endpoints of $\gamma_1$ and $\bar\gamma_1$ and that passes through $0$. Since $\Gamma_1$ divides the boundary of $D_1$ into two, each of harmonic measure $1/2$ at $0$, and the domain $D_1$ is also harmonically symmetric in both $\gamma$ and $\bar\gamma$ with respect to $0$, it follows that the two arcs of the unit circle determined by $\zeta$ and $\bar\zeta$ have equal harmonic measure $b_1 = (1-a_1)/2$ at $0$ with respect to $D_1$. By the uniqueness statement in Theorem~\ref{t2}, there is only one configuration (up to rotation) that realises the configuration in Theorem~\ref{t2} with $n=2$ and symmetric data $a_1$, $a_1$, $b_1$, $b_1$ and this configuration is the disk with the ends of a diameter removed. Since $D_1$ is symmetric in the real axis, we conclude that $\gamma_1$ lies along the imaginary axis. 
\end{proof}

\end{document}